\documentclass[a4paper,abstracton]{scrartcl}
\usepackage[utf8]{inputenc}
\usepackage{amsfonts,amsthm,amssymb,amsmath}
\usepackage[inline]{enumitem}
\usepackage{color}
\usepackage{graphicx} 
\usepackage{tikz}
\usepackage{authblk}

\renewcommand{\leq}{\leqslant}

\newtheorem{thm}{Theorem}
\newtheorem{lem}[thm]{Lemma}

\newtheorem{conj}[thm]{Conjecture}

\usepackage{subcaption}
\theoremstyle{remark}

\definecolor{pink}{RGB}{219, 48, 122}

\title{Irreducible distinguishing colourings and the Axiom of Choice}           
{}                 

\author{Marcin Stawiski\\stawiski@agh.edu.pl}            
\affil{AGH University of Krakow,\\ Faculty of Applied Mathematics, \protect\\al. Mickiewicza 30, 30-059 Krakow, Poland}  
{}                     
{}

\begin{document}
\maketitle


\begin{abstract}
We say that a vertex or edge colouring of a graph is \emph{distinguishing} if the only automorphism that preserves this colouring is the identity. A (proper) distinguishing colouring is \emph{irreducible} if there is no possibility of merging two non-empty classes of colours to obtain a (proper) distinguishing colouring. We show that every graph has an irreducible (proper) distinguishing vertex colouring and that every graph without isolated edge and with at most one isolated vertex has an irreducible (proper) distinguishing edge colouring. Moreover, we show that the existence of any of these colourings for every connected graph (not isomorphic to $K_2$) is equivalent to the Axiom of Choice.
\\\textbf{Keywords}: proper colourings, distinguishing colourings, asymmetric colourings, infinite graphs, graph automorphisms, Axiom of Choice.     \\          
\textbf{MSC}: 05C15, 03E25, 05C25, 05C63.   \end{abstract}


\section{Introduction}

Let $c$ be a vertex or edge colouring of a graph $G$, and $\varphi$ be an automorphism of $G$. We say that $\varphi$ \emph{preserves} $c$ if every vertex (edge) is mapped to a vertex (edge) of the same colour. Otherwise, we say that $c$ \emph{breaks} $\varphi$. If $c$ breaks all non-identity automorphisms of $G$, then we say that $c$ is \emph{distinguishing}.  Note that a graph with an isolated edge or at least two isolated vertices cannot have a distinguishing edge colouring. Other graphs always have distinguishing edge colourings, because it is enough to colour each vertex with a unique colour. Distinguishing vertex colourings always exists.

Distinguishing vertex colourings were introduced by Babai \cite{BAB} in 1977 under the name \emph{asymmetric} colourings, which is still sometimes used. His study of this notion ultimately led to the proof of the existence of a quasi-polynomial algorithm for the Graph Isomorphism Problem \cite{babaiisomorphism}.

The \emph{von Neumann Hierarchy} (proposed by Zermelo in \cite{Zermelo}) $(V_\alpha\colon \alpha \in Ord)$ is the hierarchy defined by a transfinite induction as follows:
\begin{enumerate}
    \item $V_0=\emptyset,$
    \item $V_{\alpha+1}=\mathcal{P}(V_\alpha)$\textnormal{ for every ordinal $\alpha$,}
    \item $V_{\alpha}=\bigcup\{V_\beta\colon \beta<\alpha\}$\textnormal{ for every limit ordinal $\alpha$.}
\end{enumerate}
We define the \emph{rank} of a set $S$ as the smallest ordinal $\alpha$ such that $S\in V_{\alpha+1}$. We note that the statement "every set has a rank" is equivalent to the Axiom of Foundation in $ZF^-$ (Zermelo-Fraenkel Set Theory without the Axiom of Choice and Foundation).

Throughout the paper, we do not assume that the Axiom of Choice holds, and we work in ZF. We say that sets $X$ and $Y$ are \emph{equinumerous} if there exists a bijection from $X$ to $Y$, and we write $|X|=|Y|$ or $X\sim Y$. We define an \emph{initial  ordinal} as an ordinal that is not equinumerous with any smaller ordinal. In ZFC, every set is equinumerous with exactly one initial ordinal. In this paper, we define a \emph{cardinal number} as an equivalence class of the equinumerosity relation. The \emph{cardinality} of a set $|X|$ is the set $|X|=\{Y\colon X \sim Y,\textnormal{ and $Y$ is of the least rank}\}$. If there exists an injection from the set $X$ to $Y$, then we write $|X|\leq|Y|$ or $X \preceq Y$.

The \emph{distinguishing (chromatic)  number} of a graph $G$ is the smallest cardinality of the set of colours in a (proper) distinguishing vertex colouring of $G$, if such cardinality exists. Similarly, the \emph{distinguishing (chromatic) index} of a graph $G$ is the smallest cardinality of the set of colours in a (proper) distinguishing edge colouring of $G$, if it exists.


Galvin and Komjáth \cite{galvin} studied the notion of proper colourings in ZF. They showed that every graph has a chromatic number if and only if the Axiom of Choice holds. Stawiski \cite{StawiskiAC} studied the notions of proper and distinguishing colourings in ZF in both vertex and edge variants, and proved the following theorem.

\begin{thm}[Stawiski 2023 \cite{StawiskiAC}]\label{tw:kolorowaniaAC}
Assume the notion of cardinal number as an initial ordinal number. In ZF, the following statements are equivalent:
\begin{enumerate}[label=\textnormal{\arabic*)}]
    \item  The Axiom of Choice.\label{nwsrA:0}
    \item Every connected graph has a distinguishing number.\label{nwsrA:1j}
    \item Every connected graph has a distinguishing index unless it is isomorphic to $K_2$.\label{nwsrA:2j}
    \item Every connected graph has a chromatic index.\label{nwsrA:3j}
\end{enumerate}
\end{thm}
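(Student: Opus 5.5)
We prove the cycle of implications 1)$\Rightarrow$2), 1)$\Rightarrow$3), 1)$\Rightarrow$4), and then 2), 3), 4) each $\Rightarrow$ 1). Cardinals here are initial ordinals. Under AC every set of colours can be well-ordered, and the class of cardinalities of colour sets is a nonempty class of ordinals, so it has a least element.

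\textbf{The direction from 1).} For 2), colour each vertex with its own colour; this gives a distinguishing colouring, so the relevant class of cardinalities is nonempty. For 3), colour each vertex with its own colour and give each edge the pair of its endpoints' colours. This edge colouring is distinguishing whenever $G\not\cong K_2$ is connected, since then every vertex other than a single isolated vertex is determined by its incident edges. For 4), any injective edge colouring is proper. In each case AC well-orders the colour set, so a least initial ordinal exists and the number is defined.

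\textbf{The converse.} We prove it via the equivalent form: for every set $X$ there is an injection from $X$ into an ordinal, i.e.\ every set is well-orderable. Fix a set $X$; replacing it by $X\times\{0\}$ if needed, we may assume it is disjoint from all ordinals. Let $\kappa$ be Hartogs' number of $X$, the least ordinal with no injection into $X$. We build a connected graph $G$ from $X$ and $\kappa$ so that every distinguishing vertex colouring (resp.\ distinguishing edge colouring, resp.\ proper edge colouring) of $G$ indexed by an ordinal yields an injection of $X$ into an ordinal.

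\emph{Construction for 2).} Take a central vertex $r$. For each $x\in X$ attach a pendant path of length $2$ from $r$. These paths are pairwise interchangeable by automorphisms, so any distinguishing colouring must give them distinct colour patterns. To force a well-ordered colour set, also attach the ordinal $\kappa$ as rigid gadgets: for each $\alpha<\kappa$ a path of length $3$ at $r$, all of them mutually isomorphic. If $G$ has a distinguishing number $\lambda$, then a colouring with colour set $\lambda$ exists, and mapping each $x$ to the colour pair on its path gives an injection $X\to\lambda\times\lambda$. This well-orders $X$.

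The subtle point is that the statement asserts existence of the least cardinality, not merely of some colouring. The construction must therefore guarantee that some ordinal-indexed distinguishing colouring exists exactly when $X$ is well-orderable. The paths of length $3$ indexed by $\kappa$ can be coloured injectively using $\kappa$ itself, but the $X$-paths cannot be. One then checks that "has a distinguishing number" means some colour set equinumerous to an initial ordinal works, which forces an injection $X\hookrightarrow$ ordinal.

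\emph{Constructions for 3) and 4).} For 3) we use the same graph; distinct edge colour patterns on the pendant paths again inject $X$ into $\lambda\times\lambda$. For 4), take the star $K_{1,X}$ with centre $r$. A proper edge colouring with an ordinal colour set $\lambda$ is injective on the edges at $r$, so it injects $X$ into $\lambda$. Existence of the chromatic index thus forces $X$ to be well-orderable.

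\textbf{Main obstacle.} The delicate step is 2)$\Rightarrow$1) and 3)$\Rightarrow$1). There one must make sure the conclusion "the minimum exists" really forces a well-ordered colouring, and that the gadgets avoid unintended automorphisms. I would first verify carefully that the automorphism group is exactly the permutations within each family of isomorphic pendant paths, before extracting the injection.
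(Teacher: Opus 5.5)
The paper does not prove this theorem; it is quoted from \cite{StawiskiAC}, so there is no in-paper argument to compare with directly. Your proof is correct.

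Its whole content is one observation. Under the initial-ordinal convention, ``$G$ has a distinguishing number'' (or index, or chromatic index) produces a colouring of the required kind whose colour set is equinumerous with an ordinal $\lambda$. Suppose $G$ contains pieces indexed by $X$, any two of which are swapped by an automorphism fixing the rest. Then that colouring must separate the pieces, so $X\preceq\lambda\times\lambda$ and $X$ is well-orderable.

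Much of the surrounding material is superfluous, and some of it is mislabelled:
\begin{itemize}
\item The $\kappa$-indexed paths of length $3$ do nothing. Nor are they rigid: they are mutually interchangeable.
\item You only need the transpositions of two $X$-paths, not the full automorphism group.
\item The ``subtle point'' is not one. The converse needs only a single colouring with a well-orderable colour set, not an ``exactly when'' statement, and a least such ordinal then exists automatically.
\end{itemize}
In fact the star $K_{1,X}$ with $|X|\geq 2$ settles all three converse implications at once. Every distinguishing vertex colouring of it is injective on the leaves, and every distinguishing edge colouring or proper edge colouring is injective on the edges.

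The instructive comparison is with the paper's proof of Theorem~\ref{thm:main2}. There the hypothesis (existence of an irreducible colouring) says nothing about well-orderability of the colour set. So the paper uses the double stars $DS(X,Y)$ and $DC(X,Y)$ to extract $X\preceq Y$ or $Y\preceq X$, i.e.\ the Law of Trichotomy.

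Your direct argument is shorter, but it rests entirely on the initial-ordinal convention. Under Scott's definition (the question of Banerjee, Moln\'ar and Gopaulsingh recalled in the introduction), $K_{1,X}$ simply has distinguishing number $|X|$, and the argument yields nothing. That is why a comparability-forcing gadget, of double-star or Hartogs type, would be needed in that setting.
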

 The \emph{rank} of a set $X$ is the smallest ordinal $\alpha$ such that $X\in V_{\alpha+1}$. 
In ZF, we define the \emph{cardinality} $|X|$ of a set $X$ as $|X|=\{Y\colon |X|=|Y|\textnormal{ and \ensuremath{Y} is of least rank}\}$. 
We say that a set is a \emph{cardinal number} if it is the cardinality of some set.

Banerjee, Molnár, and Gopaulsingh \cite{Ban1} studied connections between K\H{o}nig’s Lemma and the existence of irreducible proper vertex colourings of locally finite connected  graphs in ZF. They proved that the following statements are equivalent in ZF by assuming the definition of a cardinal number as an equivalence class of the equinumerous relation.
\begin{thm}[Banerjee, Molnár, Gopaulsingh 2024 \cite{Ban1}] The following statements are equivalent to each other in ZF.
\begin{enumerate}[label=\textnormal{\arabic*)}]
    \item K\H{o}nig's Lemma.
    \item Every locally finite connected graph has a chromatic number.
    \item Every locally finite connected graph has a distinguishing number.
    \item Every locally finite connected graph has a distinguishing chromatic number.
    \item Every locally finite connected graph has a chromatic index.
    \item Every locally finite connected graph not isomorphic to $K_2$ has a distinguishing  index.
    \item Every locally finite connected graph not isomorphic to $K_2$ has a distinguishing chromatic index.
\end{enumerate}
\end{thm}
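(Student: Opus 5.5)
The plan is to reduce everything to the well-known ZF fact that König's Lemma is equivalent to the principle $\mathrm{AC}_\omega^{\mathrm{fin}}$: every countable family of non-empty finite sets has a choice function. Equivalently, every countable union of finite sets is countable. So for each of 2)--7) I will show two things: $\mathrm{AC}_\omega^{\mathrm{fin}}$ implies the statement, and the failure of $\mathrm{AC}_\omega^{\mathrm{fin}}$ yields a locally finite connected graph for which the relevant invariant does not exist.

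\emph{From König's Lemma to 2)--7).} Let $G$ be locally finite and connected, and fix a vertex $r$. Let $S_n$ be the set of vertices at distance $n$ from $r$. An induction on $n$ using local finiteness shows that each $S_n$ is finite. Hence $V(G)=\bigcup_n S_n$ is a countable union of finite sets, so it is countable and admits a bijection with a subset of $\omega$. The edge set $E(G)\subseteq [V(G)]^2$ is then also countable. Once $V(G)$ and $E(G)$ are well-ordered, I can exhibit the required colourings explicitly:
\begin{itemize}
\item colouring every vertex by its own index is proper and distinguishing;
\item colouring every edge by its own index is proper, and it is distinguishing when $G\not\cong K_2$. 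An automorphism fixing every edge fixes every vertex of degree at least $2$, and a connected graph with at least $3$ vertices has no two adjacent vertices of degree $1$.
\end{itemize}
Any colouring of a countable set can be recoloured injectively into $\omega$ along the well-order, so the attainable colour-set cardinalities are among the cardinals $\leq\aleph_0$. That class is well-ordered, so its least element exists, and this gives 2)--7).

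\emph{Converse.} Assume König's Lemma fails. Then there is a sequence $(A_n)_{n\in\omega}$ of pairwise disjoint non-empty finite sets with no choice function. Passing to products $A_0\times\dots\times A_n$, I may assume $|A_n|\to\infty$ and that no infinite subfamily has a choice function. I then build a locally finite connected graph $G$ layer by layer:
\begin{itemize}
\item a root $r$ adjacent to every element of $A_0$;
\item each layer $A_n$ made into a clique (for the vertex statements);
\item every element of $A_n$ joined to every element of $A_{n+1}$;
\item rigid gadgets, such as pendant paths of length depending on $n$, attached uniformly to all vertices of $A_n$, so that the layer index is an automorphism invariant.
\end{itemize}
In this graph, $\mathrm{Aut}(G)$ contains every permutation of each $A_n$, independently across layers. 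Hence any distinguishing or proper colouring must be injective on every $A_n$. For the edge versions I take the line-graph analogue: pendant stars whose leaves are indexed by $A_n$, so that a distinguishing or proper edge colouring is injective on the corresponding edge sets.

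The key claim is that such a $G$ has no chromatic, distinguishing, or distinguishing chromatic number (respectively index). Suppose $c\colon V(G)\to C$ is such a colouring. If $C$ is finite, or more generally if some infinite subset of $\bigcup_n c[A_n]$ is well-orderable, then transporting that order back through the injections $c\restriction A_n$ yields a choice function on infinitely many $A_n$, a contradiction. So $C$ cannot be well-orderable. On the other hand, colourings do exist: the identity colouring has colour set $V(G)$. It remains to show that the set of attainable cardinalities $|C|$ has no least element. Given $c$ with colour set $C$, I would construct $c'$ with $|C'|<|C|$. The first attempt is to identify the colours used on layer $A_{n}$ with colours used on layer $A_{n+2}$, which is legitimate because those layers are non-adjacent and distinguished by the gadgets. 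This gives a surjection $C\to C'$ that is not injective. I then need to show there is no injection $C\to C'$, using the fact that any such injection would yield a uniform choice from the sets $A_n$.

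I expect this last strictness argument, that $|C'|<|C|$ genuinely holds in ZF without any well-ordering available, to be the main obstacle. Constructing the counterexample graph is routine by comparison. If a direct comparison fails, the fallback is to show that every attainable $C$ admits a surjection onto a set with no minimal cardinality below it, by exploiting that $|A_n|\to\infty$ forces infinitely many independent choices.
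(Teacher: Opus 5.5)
The paper does not prove this statement; it only cites it from \cite{Ban1}, so your proposal has to be judged on its own. Your forward direction is correct. K\"{o}nig's Lemma gives $\mathrm{AC}^{\mathrm{fin}}_\omega$, and the distance layers then make $V(G)$ and $E(G)$ countable. Every colour set can be shrunk to the image, which is well-orderable of size at most $\aleph_0$, and the injective colourings show the relevant classes are non-empty.

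The converse has a genuine gap exactly where you suspected: you never prove that the attainable colour-set cardinalities have no least element. The step you propose is also not available in ZF. Identifying the colours of $A_n$ with those of $A_{n+2}$ requires fixing an injection between $c[A_n]$ and $c[A_{n+2}]$ for infinitely many $n$ at once. That is infinitely many choices from finite sets, which is exactly what fails here. Even granting it, nothing in your plan shows $|C'|<|C|$.

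\textbf{The missing idea.} Half of it is already in your text. Write $A=\bigcup_n A_n$. Your observation that no infinite subset of $c[A]$ is well-orderable says precisely that $c[A]$ is Dedekind-finite. For a Dedekind-finite $X$ and $k\in X$ one has $|X\setminus\{k\}|<|X|$, so it suffices to delete a single colour.
\begin{enumerate}
\item Take $k\in c[A]$. Its class meets each layer in at most one element. Hence it meets only finitely many layers, since otherwise the class itself is a choice function on an infinite subfamily. So its part in $A$ is finite.
\item Recolour each of these finitely many elements with a colour from $c[A]\setminus\{k\}$ avoiding the finitely many colours on its neighbours and on its own layer. This uses only finitely many choices and keeps the colouring suitable.
\item The new colour set $D$ satisfies $D\subseteq c[A]\setminus\{k\}$. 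Then $|C|\leq|D|$ would give $|c[A]|\leq|c[A]\setminus\{k\}|$, which is impossible.
\end{enumerate}
So no attainable cardinality lies below all others. The edge versions work the same way, with your stars in place of the layers.

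\textbf{Two adjustments your construction needs.}

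First, $C$ itself need not be Dedekind-finite. Colours used only on the root, the gadgets, or (for edges) the spine can form a copy of $\omega$. So before deleting $k$, recolour that part from a finite palette chosen inside $c[A]$; one colour suffices in the non-proper distinguishing cases. Make finitely many local corrections where the palette's finite classes in $A$ clash, and take $k$ outside the palette.

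Second, pendant paths attached to every vertex of $A_n$ break your claim that a distinguishing colouring must be injective on $A_n$. The swap of $a,b\in A_n$ can instead be broken on their paths. Keep each layer a set of twins, for instance by making $|A_n|$ strictly increasing so that degrees alone separate the layers. Then a colouring is distinguishing if and only if it is injective on every layer, which is exactly what step 2 needs.
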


Furthermore, the question mentioned below is still open to the best of our knowledge.

\begin{conj} The following statements are equivalent the Axiom of Choice in ZF assuming the definition of a cardinal number as  an equivalence class of the equinumerous relation.
\begin{enumerate}[label=\textnormal{\arabic*)}]
    \item Every infinite connected graph has a chromatic index.
    \item Every infinite connected graph has a distinguishing chromatic number.
    \item Every infinite connected graph has a distinguishing chromatic index.
    \item Every infinite connected graph has a distinguishing number.
    \item Every infinite connected graph has a distinguishing chromatic index.
\end{enumerate}
\end{conj}

Banerjee, Molnár, and Gopaulsingh \cite{Ban2} asked if the conditions \ref{nwsrA:1j}--\ref{nwsrA:3j} of Theorem \ref{tw:kolorowaniaAC} imply the Axiom of Choice if we instead use the definition of a cardinal number due to Scott i.e. based on the concept of a rank.

Let $c\colon X\rightarrow Y$ be a vertex (edge) colouring of a graph $G$ with some property $\Phi$. Let $a,b \in Y$ be two colours and let $\sigma$ be a function from $Y$ to itself such that $\sigma(b)=a$ and $\sigma(x)=x$ for $x\neq b$. A \emph{reduction} of two (not necessarily different) colours $a,b$ is a composition of $\sigma \circ c$. A process of obtaining a reduction from a given colouring is called \emph{merging}. In other words, a colouring $c'$ is a reduction of $c$ if it is equal to $c$, or there exist colours $c_1,c_2$ used in $c$ such that each vertex (edge) of colour $c_1$ in $c$ has the colour $c_2$  in $c'$ and the remaining colours in $c'$ are the same as in $c$. We say that $c$ is \emph{irreducible}  (with respect to the property $\Phi$) if there is no non-trivial reduction of $c$ with the property $\Phi$. Connections between the Axiom of Choice and irreducible proper vertex colourings were studied by Galvin and Komjáth \cite{galvin}. They proved that the Axiom of Choice is equivalent to the statement "Every connected graph has an irreducible proper vertex colouring" in ZF.


The main theorem of the paper is as follows:
\begin{thm}\label{thm:main1}
Let $G$ be a graph, and $c$ be a (proper) distinguishing vertex (edge) colouring of $G$ with a well-orderable set of colours. Then there exists a reduction $c'$ of $c$ which is an irreducible (proper) distinguishing vertex (edge) colouring.
\end{thm}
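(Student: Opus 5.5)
The plan is to identify $Y$ with an ordinal $\kappa$ through a fixed well-ordering, and to build by transfinite recursion a map $f\colon\kappa\to\kappa$ with $f(\alpha)\le\alpha$ and $f\circ f=f$. The reduction will be $c'=f\circ c$. I read "reduction" as allowing a composition of merges, i.e.\ any colouring of the form $g\circ c$, so the output $c'=f\circ c$ has this form. The well-ordering is what lets us avoid choice: every decision in the recursion is "take the least admissible ordinal", so the construction is definable from $c$ and the well-order alone.

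\textbf{The recursion.} At stage $\alpha$ we have $f{\restriction}\alpha$. Let $c_\alpha$ be the colouring in which a vertex (edge) of original colour $\gamma<\alpha$ receives $f(\gamma)$, and every original colour $\gamma\ge\alpha$ is left unchanged. We then set $f(\alpha)=\beta$ for the least representative $\beta=f(\beta)<\alpha$ such that recolouring class $\alpha$ to $\beta$ in $c_\alpha$ still gives a (proper) distinguishing colouring. If no such $\beta$ exists, we set $f(\alpha)=\alpha$, so $\alpha$ becomes a new representative. The inductive invariant to maintain is that every $c_\alpha$ is (proper) distinguishing. Successor steps preserve it by the choice of $f(\alpha)$.

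\textbf{Limit stages (the main obstacle).} Properness is of finite character: two adjacent vertices (incident edges) share a colour in $c_\lambda$ iff they share it in $c_\alpha$ for some $\alpha<\lambda$. So properness passes to limits. Distinguishability does not obviously do so. An automorphism $\varphi$ preserving $c_\lambda$ fixes every $c_\lambda$-class setwise, but it may mix original colours inside a merged class and also inside several later merged classes at once. If that happens, no single $c_\alpha$ witnesses that $\varphi$ is preserved.

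My first attempt at this step is to take a nonidentity $\varphi$ preserving $c_\lambda$, and let $\gamma<\lambda$ be the least original colour whose class $\varphi$ does not map onto itself. Then $\varphi$ sends some element of colour $\gamma$ to original colour $\delta\neq\gamma$ with $f(\gamma)=f(\delta)$. I would try to show that $\varphi$ already preserves the colouring tested when $\max(\gamma,\delta)$ was merged, contradicting the choice made at that stage. If this fails, I would strengthen the admissibility test at stage $\alpha$. Instead of testing only $c_\alpha$, one tests the colouring in which all classes whose fate is still undecided are kept as original colours, and one records that every automorphism preserving a merged class is broken by colours below the merged representative. I expect this is exactly where the real work of the proof lies.

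\textbf{Irreducibility.} Once every $c_\alpha$, and hence $c'=c_\kappa$, is (proper) distinguishing, suppose two classes of $c'$ with representatives $\beta<\alpha$ could be merged while keeping the property. Then at stage $\alpha$ the candidate $\beta$ was available: merging $\alpha$ into $\beta$ in $c_\alpha$ yields a finer colouring than this merge of $c'$. A finer colouring of a distinguishing colouring is still distinguishing. Properness needs more care, since refining also preserves it, but I must check that the later merges do not matter. The comparison is with a coarser colouring, so refinement gives exactly what is needed for both properties. Hence $\alpha$ would not have become a representative, which is a contradiction. So $c'$ is an irreducible reduction of $c$.
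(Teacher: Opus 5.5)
\textbf{The limit stage is a genuine gap, and it cannot be closed, because the statement is false.}

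Your successor steps are correct. Your irreducibility argument is also correct, given that every $c_\alpha$ is distinguishing: if two classes of $c'$ with representatives $\beta<\alpha$ could be merged, the same merge in the finer colouring $c_\alpha$ would have passed the test at stage $\alpha$. The problem is the limit stage, which you leave open, and your first attempt there fails.

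Take two disjoint rays $a_0a_1\ldots$ and $b_0b_1\ldots$ with $c(a_i)=2i$ and $c(b_i)=2i+1$. The only non-trivial automorphism is the swap $\sigma$. At every finite stage $n$, merging $n$ into $0$ is admissible, because colours above $n$ still separate some $a_i$ from $b_i$. So $f(n)=0$ for all $n$, and the output $c_\omega$ is monochromatic and preserved by $\sigma$. Your least-$\gamma$ argument gives $\gamma=0$ and $\delta=1$, yet the colouring tested at stage $1$ is broken by $\sigma$ (through $a_1,b_1$). Every tested colouring was distinguishing only thanks to colours not yet processed.

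Your proposed remedy is not made precise. Any stricter test would also have to keep the implication your irreducibility proof uses: an admissible merge in $c'$ must pass the test at stage $\alpha$. You do not address this.

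\textbf{Why no limit rule can work.} Neither your rule nor the paper's device of ``reversing'' merges at limit steps can succeed, because the statement fails as written. Here is a counterexample.

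\begin{enumerate}
\item Let $K=2^{<\omega}$ and fix an enumeration of $K$.
\item For infinite $X\subseteq K$, let $x_0,x_1,\ldots$ enumerate $X$ increasingly, and let $\chi_X\in 2^\omega$ be its characteristic sequence.
\item Let $s_X$ and $s'_X$ be the sequences in $K$ that interleave $(x_k)_k$, respectively $(x_{k+1})_k$, with a common tag $(t_X(\tau))_{\tau\neq\emptyset}$. Here $t_X(\tau)=\tau$ if $\tau$ is an initial segment of $\chi_X$; otherwise $t_X(\tau)$ is $\tau$ with its last entry deleted.
\item Let $G$ be the disjoint union of rays $R_s$ for $s\in S=\{s_X,s'_X\colon X\subseteq K \text{ infinite}\}$. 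Colour the $i$-th vertex of $R_s$ by $s(i)$.
\end{enumerate}

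The automorphisms of $G$ are exactly the permutations of the rays. Hence a reduction $g\circ c$ is distinguishing if and only if the sequences $g\circ s$, $s\in S$, are pairwise distinct.

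For $s\neq t$ in $S$, consider the graph on $K$ with edges $\{s(i),t(i)\}$ for $s(i)\neq t(i)$. It has an infinite component:
\begin{itemize}
\item if $\{s,t\}=\{s_X,s'_X\}$, this is the path $x_0x_1x_2\ldots$;
\item if the tags differ, it is a double ray through the node where $\chi_X$ and $\chi_Y$ branch.
\end{itemize}
Moreover, $s_X$ and $s'_X$ differ only at entries lying in $X$.

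It follows that $g\circ c$ is distinguishing if and only if every fibre of $g$ is finite. But then merging any two fibres keeps it distinguishing. So this $c$, which has countably many colours, has no irreducible distinguishing reduction at all.
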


Note that this has not been previously known to hold for distinguishing colourings, even if the Axiom of Choice is assumed.

The second main theorem gives some statements, which are equivalent to the Axiom of Choice in ZF.

\begin{thm}\label{thm:main2}
In ZF the following statements are equivalent:
\begin{enumerate}[label=\textnormal{\arabic*)}]
\item The Axiom of Choice.
    \item Each connected graph has an irreducible distinguishing vertex colouring.
     \item Each connected graph has an irreducible distinguishing edge colouring.
     \item Each connected graph has an irreducible proper distinguishing vertex colouring.
     \item Each connected graph has an irreducible proper distinguishing edge colouring.
\end{enumerate}
\end{thm}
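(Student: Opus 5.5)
The forward implication should follow from Theorem~\ref{thm:main1}, so most of the work lies in showing that each of 2)--5) implies the Axiom of Choice.

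\emph{From the Axiom of Choice to 2)--5).} Assume AC and let $G$ be a connected graph. For the vertex variants I take the colouring $c=\mathrm{id}_{V(G)}$. It is proper and distinguishing, and its colour set $V(G)$ is well-orderable by AC. Theorem~\ref{thm:main1} then gives a reduction of $c$ that is an irreducible (proper) distinguishing vertex colouring.

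For the edge variants I proceed the same way with $c=\mathrm{id}_{E(G)}$. This is a proper distinguishing edge colouring whenever $G\not\cong K_2$, because a connected graph with at least three vertices has no isolated edge and no isolated vertex. I will treat $K_2$, which has no distinguishing edge colouring, as the obvious tacit exception in 3) and 5), matching Theorem~\ref{tw:kolorowaniaAC}.

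\emph{From 2)--5) to the Axiom of Choice.} I will prove, in ZF, that each statement implies every set $X$ is well-orderable. Given $X$, let $\kappa$ be the Hartogs number of a suitable set built from $X$, for instance $\mathcal P(X)$. I then build a connected graph $G_X$ with two parts:
\begin{itemize}
\item a hub to which the elements of $X$ are attached as pairwise interchangeable pendant gadgets, so that every permutation of $X$ extends to an automorphism;
\item a rigid part indexed by the ordinals $\alpha<\kappa$, consisting of pairwise non-isomorphic rigid trees $T_\alpha$ (for example, rays decorated according to $\alpha$) hung from a fixed root, so that $\operatorname{Aut}(G_X)\cong \mathrm{Sym}(X)$ acting only on the gadgets.
\end{itemize}
The key structural step concerns an irreducible distinguishing colouring $c$ of $G_X$. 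First, $c$ must distinguish all the $X$-gadgets. Second, irreducibility forces every colour class to meet the $X$-part in an essential way: if a class met only the rigid part, merging it with any other class would still give a distinguishing colouring. In the proper variants the merged colouring must also stay proper. To handle this I will subdivide edges (or insert gadgets) so that two colour classes can always be merged without creating a monochromatic edge unless they are forced apart inside the $X$-part.

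From these facts I want to extract an injection of $X$ into an ordinal. The plan is to map each $x\in X$ to an ordinal computed from how the colour of its gadget is distributed along the rigid well-ordered part, for example the least $\alpha<\kappa$ such that this colour appears on $T_\alpha$ in a prescribed position. This map should be injective. Irreducibility is the lever here: merging the colours of two gadgets must create a nontrivial automorphism, namely the transposition of those two gadgets, and that can only happen if the two colours have the same ``fingerprint'' on the rigid part. The edge versions should reduce to the vertex versions by working in a suitable line-graph-like modification of $G_X$ whose edge automorphisms coincide with its vertex automorphisms.

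\emph{Main obstacle.} I expect the hardest step to be designing the gadget-plus-rigid construction so that irreducibility genuinely forces colour classes to encode ordinals. Without that, all irreducibility yields is a bijection between $X$ and the set of colour classes, which proves nothing in ZF. I would first try to adapt the Galvin--Komj\'ath construction, which derives AC from irreducible proper vertex colourings. The adaptation would replace their properness obstructions by symmetry obstructions: interchangeable twin vertices whose colours must differ play the role that adjacent vertices play in their argument.
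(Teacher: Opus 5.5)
Your forward direction matches the paper: apply Theorem~\ref{thm:main1} to the identity colouring, whose colour set is well-orderable under AC, with $K_2$ as the tacit exception in the edge variants. The converse, however, has a genuine gap. You never produce a graph on which irreducibility yields an injection, and the design you sketch cannot yield one.

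Suppose $\mathrm{Aut}(G_X)\cong\mathrm{Sym}(X)$ acts only on the $X$-gadgets and fixes the ordinal-indexed rigid part pointwise. Then whether a colouring, or any reduction of it, is distinguishing depends only on its restriction to the gadgets. Irreducibility therefore says nothing about how colours are laid out on the rigid part; it only forces every colour to occur on some gadget. So no ``fingerprint'' read off the rigid part is forced to be injective. Moreover, merging the colours of two twin gadgets always preserves their transposition, so those merges are blocked automatically and carry no information. This is exactly the obstacle you name, and the proposal does not get past it. The sketches for the proper variants (subdivisions) and the edge variants (a line-graph-like modification) are also left undetermined.

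The missing idea is to make the second part symmetric too, so that two twin families are compared through a common colour set. The paper uses the double star $DS(X,Y)$: adjacent hubs $x'$ and $y'$, with the elements of $X$ and of $Y$ as pendant leaves. Leaves at the same hub are twins, so an irreducible distinguishing colouring $c$ is injective on $X$ and on $Y$. The argument then runs as follows.
\begin{enumerate}
\item Suppose $X\not\preceq Y$ and $Y\not\preceq X$. Then $c[X]\not\subseteq c[Y]$, since otherwise $(c|_Y)^{-1}\circ c|_X$ would inject $X$ into $Y$; symmetrically $c[Y]\not\subseteq c[X]$.
\item Merge a colour occurring on $X$ but not on $Y$ with one occurring on $Y$ but not on $X$. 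Both families stay injectively coloured, so no automorphism fixing the hubs survives.
\item An automorphism swapping $x'$ and $y'$ would give a bijection $X\to Y$, which does not exist.
\end{enumerate}
So this non-trivial reduction is still distinguishing, contradicting irreducibility.

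Hence any two sets are comparable (Lemma~\ref{lem:lemat2xstar}), and comparability is equivalent to AC in ZF by Hartogs' theorem. Taking $Y$ to be the Hartogs number of $X$ gives exactly the injection of $X$ into an ordinal you were after. For the edge variants, the edges $xx'$ and $yy'$ serve as the twin families. For the proper vertex variant, the paper uses the double clique $DC(X,Y)$.
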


More about the equivalent forms of the Axiom of Choice may be found in the extensive monograph by Rubin and Rubin \cite{Rubin}. For notions of graph theory, which are not presented here, see the monograph by Diestel \cite{Diestel}. For notions of set theory, which are not presented here, see the monograph by Jech \cite{Jech}.

\section{Well orderable-sets of colours and irreducible distinguishing colourings}\label{chapter:ac}

In this section, we prove Theorem \ref{thm:main1}.

\begin{proof} We shall handle all possible versions of the statement in the theorem simultaneously. We say that a colouring $c$ is \emph{suitable} if it is a (proper) distinguishing vertex (edge) colouring depending on the considered statement.
   Let $G$ be a graph, and $c$ be a suitable colouring with a well-orderable set $\kappa$ of colours. Without loss of generality, we can assume that $\kappa$ is a cardinal number defined as an initial ordinal. We construct $c'$ from $c$ by induction on $\alpha \leq \kappa$.  In step $\alpha$ we assume that we have colourings $(c_\gamma\colon \gamma\leq \alpha)$ with the following properties:
    \begin{enumerate}
        \item For every $\gamma\leq \alpha$ the colouring $c_\gamma$ is distinguishing.
        \item For every $\gamma\leq \alpha$ the colouring $c_\gamma$ is proper if the statement requires a proper colouring.
        \item For every $\delta<\gamma\leq \alpha$ the colouring $c_\gamma$ is a reduction of $c_\delta$. 
        \item For every $\gamma<\alpha$ the colouring $c_{\gamma+1}$ is equal to $c_\gamma$ or $c_{\gamma+1}$ can be obtained from $c_\gamma$ by merging some classes of colours greater than $\gamma$ with $\gamma$.
    \end{enumerate}
    
    We put $c_0=c$. All the conditions above are satisfied for $c_0$. Let $S_\alpha(0)=\{c_0\}$, and $c_\alpha^0=c_\alpha$. In step $\alpha$, we consider all the colours $\beta$ for $\alpha<\beta \leq \kappa$ by induction on $\beta$. We shall modify the colouring $c_\alpha$ at some stages of this induction, and we shall define colourings $(c_\alpha^\beta\colon \alpha<\beta\leq \kappa)$, which will correspond to the currently considered colouring obtained from $c_\alpha$ by some reduction operations.

    Assume first that $\beta$ is a successor ordinal. In the construction, we define the set $S_\alpha(\beta)$, which is useful for us when we consider limit ordinals $\beta'$ in a different step $\beta'$ of induction and its exact role shall be explained later.  We check, whether the colouring obtained by merging $\beta$ with $\alpha$ is still a distinguishing (proper) colouring. If not, we put $S_\alpha(\beta)=S_\alpha(\beta-1)$. Otherwise, we construct a new colouring $c_\alpha^{\beta+1}$ by changing all the occurrences of $\beta$ in the current colouring $c_\alpha^\beta$ with $\alpha$. We put $S_\alpha(\beta)=S_\alpha(\beta-1)\cup \{c_\alpha^{\beta+1}\}$. 
    
    Assume now that $\beta$ is a limit ordinal. Let $\delta<\beta$. 
    Assume that $S_\alpha(\beta)$ is a non-empty set of (proper) distinguishing colourings such that for every pair of elements $\{x,y \}$ of $S_\alpha(\beta)$ either $x$ is a reduction of $y$ or $y$ is a reduction of $x$. This is indeed satisfied for $S_\alpha(0)$ and shall be satisfied after every step $\beta$ of induction. The role of the set $S_\alpha(\beta)$ is to construct tails of $S_\alpha(\beta)$. Some of the reduction operations in steps for limit ordinals $\beta$ may result in a colouring not being distinguishing. The role of tails and non-vanishing colourings, which we define in a moment, is to restrict ourselves to reductions which are still distinguishing. The \emph{tail} $T(\beta,\delta)$ of $S_\alpha(\beta)$ is a set of these colourings $c''$ in which for every colour $k$ a vertex (edge) $v$ has colour $k$  if and only if $v$ has the colour $k$ in all the colourings in $S_\alpha(\gamma)$ for $\beta>\gamma>\delta$. In other words, $c''$ is a limit colouring. We say that colour $\gamma<\kappa$ is \emph{non-vanishing} if its class is non-empty in all the colourings in $T(\beta,\delta)$ for some $\delta<\beta$. From the construction, it shall follow that each vertex (edge) has exactly one non-vanishing colour, because the class of available colours is well-ordered, and the colour of a vertex (edge) can decrease only a finite number of times. Consider the colouring $c^*$ in which each vertex (edge) has its non-vanishing colour. Notice that if the considered statement refers to proper colourings, then $c^*$ is proper. However, it may happen that $c^*$ is not distinguishing.
    
    Let $\Gamma$ be the set of non-trivial automorphisms of $G$ that are preserved by $c^*$. Assume that $\Gamma$ is non-empty. Consider an automorphism $\varphi \in \Gamma$. It follows that before the limit step at some point there exists a vertex (edge) which is mapped to a vertex (edge) of a different colour, and either one of them is $\alpha$ and the other one is merged with $\alpha$, or both are merged in $\alpha$.
    Therefore, there exists a unique maximal set $A_\varphi\subseteq \kappa \setminus (\alpha+1)$ such that if $a \in A_\varphi$, then there exists a vertex (edge) of $G$ that is mapped by $\varphi$ to a vertex (edge) of a colour $a^*\neq a$ in $A_\varphi$ in some colouring $k\in T(\alpha,\delta)$ for some $\delta<\beta$.

    We shall now describe the procedure of reversing the merging. We shall briefly explain the idea behind it. If we just recolour some vertices (or edges) it may happen that the standard limit colouring preserves some automorphism of a graph even if no colouring used in the definition of this limit colouring preserves this automorphism. The idea of reversing the merging of colouring $c$ is to pick a minimal set $K$ of colours for which we do not perform the merging operation. More formally, we obtain the colouring $c^*$ from a colouring $c$ by setting $c^*(x)=x$ for each $x$ such that $c(x) \in K$ and setting $c^*(x)=c(x)$ for all the remaining vertices (edges) $x$.

If we reverse the merging operation on some colour in $A_\varphi$, then the resulting colouring would not preserve $\varphi$. We would like to obtain a minimal subset $B'$ of $B=\bigcup  \{A_\varphi\colon \varphi \in \Gamma\}$ such that if we reverse the merging operation between the elements in $B'$ and $\alpha$, then we break all the automorphisms in $\Gamma$. We cannot just take one element of every set $A_\varphi$ as this alone could break more than one automorphism, and some of the chosen elements would be redundant.
    
    We now proceed with the construction of $B'$ by induction on $a<\beta$. We start with an empty set $B_0$. In step $a$ if $a\notin B$, then we do nothing. If $a\in B$, then we consider the colouring $k_a$ obtained from $c^*$ by reversing the operation of merging $b$ with $\alpha$ for every $b\in B_\alpha$ i.e. the colouring $k_a$ assigns colour $\alpha$ to vertices which which are not in $B_\alpha$ and $k_a=c^*$ otherwise. If $a$ is in $B$ and there exists an automorphism $\varphi\in \Gamma$ that is still preserved by the current colouring, then we put $B_a=\bigcup\{B_b\colon b<\alpha\}\cup \{a\}$. Otherwise, we put $B_a=\bigcup\{B_b\colon b<\alpha\}$. We proceed in a similar way as in the step for a successor ordinal: We check whether the colouring obtained by merging $\beta$ and $\alpha$ is still a distinguishing (proper) colouring. If not, then we do nothing. Otherwise, we change all the occurrences of $\beta$ with $\alpha$. We obtained a colouring $c_\alpha^{\beta}$. In contrast to the step for a successor ordinal, we put $S_\alpha(\beta)=\{c_\alpha^{\beta}\}$.
     
     We claim that the obtained colouring is a distinguishing colouring. Suppose that this is not the case. Hence, there exists an automorphism $\varphi$ which is preserved by $c_\alpha^\beta$. The colouring $c^*$ breaks
$\varphi$, and the subgroup $\Gamma'$ of automorphisms preserved by $c_\alpha^\beta$ is a subgroup of $\Gamma$. Hence, $\varphi \in \Gamma$. There exists a colour $a\in B$ such that $\varphi$ is not preserved by the colouring obtained from $c^*$ by fixing $B_a$. Similarly as above, the group of automorphisms preserved by this colouring is a subgroup of $\Gamma'$. Hence, $\varphi\notin \Gamma'$. We proved that $c_\alpha^\beta$ is distinguishing.

     After induction on all $\beta<\kappa$, we obtain a limit colouring $c_{\alpha}=c_\alpha^\kappa$. From the construction and the paragraph above, it follows that the set $(c_\gamma \colon \gamma \leq \alpha)$ satisfies the desired properties.


     After induction on all $\alpha$, we obtain a limit colouring $c'=c_\kappa$. We have already shown that it is distinguishing and proper. It remains to show that it is irreducible. Suppose that this is not the case and that there exist colours $\alpha, \beta$ for some $\alpha<\beta<\kappa$ such that $\beta$ can be merged with $\alpha$ to obtain a (proper) distinguishing colouring. However, that means that $\beta$ would be merged with $\alpha$ in $c_\alpha^\kappa$. This is a contradiction.
\end{proof}

\section{Irreducible colourings and the Axiom of Choice}

In this section, we prove Theorem \ref{thm:main2} using Lemma \ref{lem:lemat2xstar}. We first need some definitions to state this lemma.
Let $X,Y,\{x' \}$ and $\{y' \}$ be pairwise disjoint non-empty sets. Define \emph{double star} $DS(X,Y)$ as the graph with the vertex set $V(DS(X,Y))=X \cup Y \cup \{x',y'\}$ such that $e$ is an edge in $DS(X,Y)$ if and only if $e=x'y'$, $e=x'x$, or $e=y'y$ for all $x \in X$ and all $y\in Y$. A \emph{double clique} $DC(X,Y)$ is the graph obtained from $DS(X,Y)$ by adding edges between each pair of different vertices in $X$ and between each pair of different vertices in $Y$.

\begin{figure}[t] \centering \begin{subfigure}{0.48\textwidth} \centering \resizebox{!}{5cm}{ \begin{tikzpicture}[ every node/.style={ circle, fill=black, minimum size=6pt, inner sep=0pt } ] \path[use as bounding box] (-3,-2.5) rectangle (3,2.5); \node[label=above:$x_1$] (x1) at (-2.5, 2) {}; \node[label=left:$x_2$] (x2) at (-2.5, 0) {}; \node[label=below:$x_3$] (x3) at (-2.5,-2) {}; \node[label=above:$x'$] (xp) at (-0.8,0) {}; \node[label=above:$y'$] (yp) at (0.8,0) {}; \node[label=above:$y_1$] (y1) at (2.5, 2) {}; \node[label=right:$y_2$] (y2) at (2.5, 0) {}; \node[label=below:$y_3$] (y3) at (2.5,-2) {}; \draw (x1) -- (xp); \draw (x2) -- (xp); \draw (x3) -- (xp); \draw (xp) -- (yp); \draw (yp) -- (y1); \draw (yp) -- (y2); \draw (yp) -- (y3); \end{tikzpicture} } \caption{Graph $DS(X,Y)$} \end{subfigure} \hfill \begin{subfigure}{0.48\textwidth} \centering \resizebox{!}{5cm}{ \begin{tikzpicture}[ every node/.style={ circle, fill=black, minimum size=6pt, inner sep=0pt } ] \path[use as bounding box] (-3,-2.5) rectangle (3,2.5); \node[label=above:$x_1$] (x1) at (-2.5, 2) {}; \node[label=left:$x_2$] (x2) at (-2.5, 0) {}; \node[label=below:$x_3$] (x3) at (-2.5,-2) {}; \node[label=above:$x'$] (xp) at (-0.8,0) {}; \node[label=above:$y'$] (yp) at (0.8,0) {}; \node[label=above:$y_1$] (y1) at (2.5, 2) {}; \node[label=right:$y_2$] (y2) at (2.5, 0) {}; \node[label=below:$y_3$] (y3) at (2.5,-2) {}; \draw (x1) -- (xp); \draw (x2) -- (xp); \draw (x3) -- (xp); \draw (xp) -- (yp); \draw (yp) -- (y1); \draw (yp) -- (y2); \draw (yp) -- (y3); \draw (x1) -- (x2); \draw (x2) -- (x3); \draw[bend right=55] (x1) to (x3); \draw (y1) -- (y2); \draw (y2) -- (y3); \draw[bend left=55] (y1) to (y3); \end{tikzpicture} } \caption{Graph $DC(X,Y)$} \end{subfigure} \end{figure}

\begin{lem}\label{lem:lemat2xstar}
Let $X$ and $Y$ be disjoint non-empty sets. Then the following conditions are equivalent.
\begin{enumerate}[labelindent=\parindent,leftmargin=*, label=\textnormal{\alph*)}]
    \item $DS(X,Y)$ has an irreducible distinguishing vertex colouring.  \label{nwsr:lemat2xstarD}
    \item  $DS(X,Y)$ has an irreducible distinguishing edge colouring.   \label{nwsr:lemat2xstarDe}
    \item  $DC(X,Y)$ has an irreducible proper distinguishing vertex colouring. \label{nwsr:lemat2xstarCh}
    \item  $DS(X,Y)$ has an irreducible proper distinguishing edge colouring.  \label{nwsr:lemat2xstarChe}
    \item $X\preceq Y$ or $Y \preceq X$.  \label{nwsr:lemat2xstar5}
\end{enumerate}
\end{lem}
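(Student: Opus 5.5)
The plan is to reduce everything to the structure of $\mathrm{Aut}(DS(X,Y))$, which coincides with $\mathrm{Aut}(DC(X,Y))$. Assume first that $|X|,|Y|\geq 2$; the cases with a singleton side are degenerate and will be checked by hand at the end. Then $x'$ and $y'$ are the only vertices of degree at least $2$ in $DS(X,Y)$. So every automorphism either fixes $x',y'$ and permutes $X$ and $Y$ separately by arbitrary bijections, or swaps $x'$ with $y'$ and maps $X$ bijectively onto $Y$. In $DC(X,Y)$ the same description holds: the vertices $x',y'$ are the only ones adjacent to both sides' hubs, and the cliques $X,Y$ are the other blocks.

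Hence a vertex colouring $c$ is distinguishing exactly when
\begin{itemize}
\item[(i)] $c$ is injective on $X$ and injective on $Y$, and
\item[(ii)] there is no bijection $g\colon X\to Y$ with $c\circ g=c|_X$ together with $c(x')=c(y')$.
\end{itemize}
For edge colourings the same description holds with each leaf $x\in X$ replaced by the pendant edge $x'x$, each $y\in Y$ replaced by $y'y$, and the centres replaced by the single edge $x'y'$, which is fixed by every automorphism. In the proper variants (clique on $X$ in $DC$, edges at a common vertex in $DS$), condition (i) is forced by properness anyway. So all four variants a)--d) reduce to the same combinatorial problem about two injective labellings of $X$ and $Y$.

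\textbf{Step 1: a) and its analogues imply e).} Let $c$ be an irreducible colouring of one of these kinds. Put $C_X=c[X]$ and $C_Y=c[Y]$. Suppose there is a colour $a\in C_X\setminus C_Y$ and a colour $b\in C_Y\setminus C_X$. Merging $a$ with $b$ preserves (i), and it preserves properness, since the merged vertices (edges) lie on different sides and are non-adjacent. By irreducibility the merge must therefore violate (ii). That forces $c(x')=c(y')$ and forces the merged colouring to match $X$ bijectively onto $Y$. In that situation $a,b$ were the only unmatched colours, and we get a bijection $X\to Y$ sending the vertex of colour $c(x)\in C_X\cap C_Y$ to the vertex of $Y$ with the same colour, and sending $c^{-1}(a)$ to $c^{-1}(b)$. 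In every other situation one of $C_X\setminus C_Y$, $C_Y\setminus C_X$ is empty, say $C_X\subseteq C_Y$. Then $x\mapsto (c|_Y)^{-1}(c(x))$ is an injection $X\to Y$, defined without any choice because $c$ is injective on $Y$. I expect the main care to be needed here: one must check that the colours of $x',y'$ (or of the edge $x'y'$), which may coincide with leaf colours, do not spoil the merge argument. Since the centres are fixed by every non-swap automorphism, their colours only enter through condition (ii), which is exactly what is used above.

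\textbf{Step 2: e) implies a)--d).} Suppose $f\colon X\to Y$ is an injection; the case $Y\preceq X$ is symmetric. Colour every $y\in Y$ by $y$ and every $x\in X$ by $f(x)$, so the set of colours is $Y$; for edges use the pendant edges instead. If $f$ is not surjective, give $x'$ and $y'$ (or $x'y'$) the colour of some fixed $y_0\in Y$, taking care of adjacency in the proper cases. Then (ii) holds because no bijection can match colours. If $f$ is a bijection, colour $x'$ by $y_0$ and $y'$ by some $y_1\neq y_0$, so that (ii) is broken by $c(x')\neq c(y')$. In the proper versions choose $y_0,y_1$ non-adjacent to the relevant centre-neighbours; when $x'$ needs a colour outside $f[X]$, use an extra colour. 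Irreducibility then follows because every colour appears on some vertex (edge) of $Y$. Merging any two colours therefore destroys injectivity on $Y$, violating (i), or in the proper case destroys properness. The only possible extra colour is handled by checking directly that merging it recreates a swap automorphism or an improper edge. Finally, the small cases $|X|=1$ or $|Y|=1$ satisfy e) trivially, and for them the required colourings are written down explicitly, completing the equivalence.
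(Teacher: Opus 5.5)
Your route differs from the paper's. The paper proves a)$\Leftrightarrow$e) with essentially your merge argument and your colouring $y\mapsto y$, $x\mapsto f(x)$, and then passes between the variants by recolouring. You instead treat each variant directly through the description of $\mathrm{Aut}(DS(X,Y))$. That is a sensible plan: the paper's transfer a)$\Rightarrow$b), which copies leaf colours onto pendant edges, breaks down when the swap was broken only at the centres. However, two of your steps fail as written.

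\textbf{Step 1 for c).} You assert that merging $a\in C_X\setminus C_Y$ with $b\in C_Y\setminus C_X$ preserves properness because the merged leaves lie on different sides. But a merge recolours \emph{every} vertex of colour $a$ or $b$. In $DC(X,Y)$ the centre $y'$ may carry a colour from $C_X\setminus C_Y$, since it has no neighbour in $X$, and $x'$ may carry one from $C_Y\setminus C_X$. The merge then makes $x'$ or $y'$ clash with a neighbour, so irreducibility does not force a violation of (ii). In c) the centre colours do not enter only through (ii).

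A concrete instance: take $X=\{x_1,x_2,x_3\}$, $Y=\{y_1,y_2\}$, and the colouring $x_1,x_2,x_3\mapsto 1,2,3$, $y_1,y_2\mapsto 1,4$, $x'\mapsto 4$, $y'\mapsto 2$. This is an irreducible proper distinguishing colouring of $DC(X,Y)$. The only available $b$ is $4=c(x')$, so every such merge is improper, and your argument would output a bijection from a $3$-set onto a $2$-set.

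The repair is short. Properness gives $c(x')\notin C_X$ and $c(y')\notin C_Y$.
\begin{itemize}
\item If you can choose $a\neq c(y')$ and $b\neq c(x')$, the merge touches only the two non-adjacent leaves of colours $a$ and $b$, and your argument applies.
\item Otherwise, say $C_Y\setminus C_X=\{c(x')\}$. Then $Y\preceq X$: send each leaf $y$ with $c(y)\in C_X$ to $(c|_X)^{-1}(c(y))$, and send the single remaining leaf to the leaf of $X$ coloured by some fixed $a\in C_X\setminus C_Y$.
\end{itemize}
For a), b) and d) your merge argument is fine; in d) the colour of $x'y'$ lies outside $C_X\cup C_Y$.

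\textbf{Step 2 for b) and d).} As you observed, in the edge versions the centre is the single edge $x'y'$, fixed by all automorphisms. So ``colour $x'$ by $y_0$ and $y'$ by $y_1$'' has no edge analogue. If $f$ is a bijection and you colour $xx'\mapsto f(x)$, $yy'\mapsto y$, then the automorphism exchanging $x'$ and $y'$ and sending $x\mapsto f(x)$ preserves the colouring, whatever colour $x'y'$ gets.

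You must break the swap on a pendant edge, e.g.\ give $x_0x'$ a fresh colour $\ast$. Irreducibility then needs two checks: merging $\ast$ with $f(x_0)$ recreates the swap, and merging $\ast$ with any other colour repeats a colour at $x'$. For d), the edge $x'y'$ also needs a fresh colour of its own, because its neighbours use all of $Y$. In c) with $f$ onto, both $x'$ and $y'$ need fresh colours. So there can be two extra colours, not ``the only possible extra colour''. These verifications are routine, but they are the actual content of e)$\Rightarrow$b), d), and they are missing from your proposal.
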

\begin{proof}
  We denote the image of a set $X$ (with respect to the function $c$) by $c[X]$.

$\mathbf{(a\Rightarrow e):}$
Assume that  there exists an irreducible distinguishing vertex colouring $c$ of $DS(X,Y)$.
In particular, it means that every vertex in $X$ has a different colour  and every vertex in $Y$ has a different colour. Hence, $|c[X]|=|X|$ and $|c[Y]|=|Y|$. Suppose that condition \ref{nwsr:lemat2xstar5} does not hold. Hence, there exists a vertex $x \in X$ such that 
$c(x) \notin c[Y]$ and a vertex $y \in Y$ such that $c(y) \notin c[X]$.  Let $c'$ be the colouring obtained from $c$ by recolouring every vertex of colour $c(x)$ with the colour $c(y)$. Now $x'$ and $y'$ cannot be swapped because they either have different colours or they have different set of colours on their neighbours.
The colouring $c'$ is a non-trivial distinguishing reduction of $c$, because $x'$ and $y'$ cannot be swapped, and $X$ and $Y$ are fixed pointwise. This contradicts the assumption of the irreducibility of $c$.

$\mathbf{(a\Rightarrow b):}$
Assume that  there exists an irreducible distinguishing vertex colouring $c$ of $DS(X,Y)$. We colour each edge $xx'$ with colour $c(x)$ and each edge $yy'$ with colour $c(y)$. We colour $x'y'$ with one of the used colours. The obtained colouring is an irreducible distinguishing edge colouring of $DC(X,Y)$. 

$\mathbf{(b\Rightarrow a):}$
Assume that  there exists an irreducible distinguishing edge colouring $c$ of $DS(X,Y)$. We colour each vertex $x'$ with colour $c(xx')$ and each vertex $y'$ with colour $c(yy')$. We colour $x$ and $y$ with different colours used before if $|X|>1$ or $|Y|>1|$, or the same if $|X|=|Y|=1$.

$\mathbf{(e\Rightarrow a):}$
Assume now that \ref{nwsr:lemat2xstar5} holds. Without loss of generality, we  can assume that $X\preceq Y$ and that $Y$ has at least two elements. Let $f$ be an injection from $X$ to $Y$. Let $c(x)=x$, and $c(f(x))=c(x)$ for $x \in X$. We colour the uncoloured vertices of $Y$ with different unused colours. If $f$ is a bijection, then there exists an automorphism $\varphi$ of $DS(X,Y)$ that maps $X$ into $Y$. In that case, we colour $x'$ and $y'$ in $c$ with arbitrary but different colours from the set $c[Y]$. Note that each such automorphism $\varphi$ is not preserved by $c$. If $f$ is not a bijection, then we colour $x'$ and $y'$ using the same colour from set $c[Y]$. The defined colouring $c$ is distinguishing. We shall show that it is also irreducible. The set $Y$ is fixed pointwise in $c$. In particular, each element of $Y$ has to have a unique colour. Therefore, we cannot merge any two colours of $c[Y]$. Since every colour in $c$ lies in $c[Y]$, colouring $c$ is irreducible.

$\mathbf{(a\Rightarrow c):}$
Recall that $DS(X,Y)$ and $DC(X,Y)$ have the same set of vertices.
 Let $c$ be an irreducible distinguishing vertex colouring of $DS(X,Y)$. If $c[X]=c[Y]$, then we recolour $x'$ and $y'$ with new, distinct colours to obtain an irreducible proper distinguishing vertex colouring $c'$ of $DC(X,Y)$. 
 Otherwise, there exists a colour pink, which is in $c[X]\setminus c[Y]$ or in $c[Y]\setminus c[X]$. Without
loss of generality, assume that pink lies in $c[X] \setminus c[Y]$.
  We colour $y'$ with pink, and we colour
$x'$ either with a colour from $c[Y]\setminus c[X]$  if this set is non-empty, or with a new colour if $c[Y]\setminus c[x]$ is empty. The obtained colouring $c'$ is an irreducible proper distinguishing vertex colouring of the graph $DS(X,Y)$.

 $\mathbf{(c\Rightarrow a):}$
Let $c$ be an irreducible proper vertex colouring of $DC(X,Y)$. As $c(x')\neq c(y')$, colouring $c$ is a distinguishing colouring of $DS(X,Y)$. If $c(X)=c(Y)$, then we colour $x'$ with an arbitrary colour from $c(X)$, and $y'$ with a new colour. The colouring $c$ is an irreducible distinguishing colouring. Assume now that $c[X]\neq c[Y]$. Without loss of generality, we can assume that there exists a colour pink in $c(X)\setminus c(Y)$. If $c[X] = c[Y ]$, then
we recolour $x'$ and $y'$ as follows: let $c(x')$ be an arbitrary colour from $c[X]$ and $c(y')$ be a
new colour. Then, after recolouring $x'$ and $y'$, $c$ is an irreducible distinguishing colouring of $DC(X,Y)$.

 $\mathbf{(b\Rightarrow d):}$ The proof of the equivalence between \ref{nwsr:lemat2xstarDe} and \ref{nwsr:lemat2xstarChe} is essentially the same as that of the equivalence between a) and b). We obtain the desired colourings from each other by recolouring the edge $x'y'$.
 
  $\mathbf{(c\Rightarrow d):}$ Let $c$ be a proper irreducible distinguishing vertex colouring of $DC(X,Y)$. We construct an edge colouring $c'$ in the following way. We assign colour $c(x)$ to the edge $x'x$  for each $x\in X$, and we assign the colour $c(y)$ to the edge $y'y$  for each $y\in Y$. Furthermore, we assign a new colour to the edge $x'y'$. Either this gives us an irreducible proper distinguishing edge colouring of $DC(X,Y)$, or we can
obtain such by recolouring one of the edges of the form $xx'$ with a new colour if $X \sim Y$, because if $X \sim Y$ and $c(X)=c(Y)$, then we have to recolour one of the edges of such form to a new colour as it is the necessary and sufficient way to distinguish $X$ from $Y$.

$\mathbf{(d\Rightarrow c):}$ Similarly, as in the paragraph mentioned in the proof of $\mathbf{(c\Rightarrow d)}$, if $c'$ is an irreducible proper distinguishing edge colouring of $DS(X,Y)$, then we assign to each vertex $y\in Y$,  colour $c(y)=c'(yy')$, and to each vertex $x\in X$,  colour $c(x)=c(xx')$.
We colour $x'$ in $c$ with an arbitrary colour from the set $c[Y] \setminus c[X]$ if $c[Y] \setminus c[X]$ is non-empty or with a new colour if this set is empty, and we colour $y'$ in $c$ with an arbitrary colour
from the set $c[X]\setminus c[Y]$ if $c[X]\setminus c[Y]$ is non-empty or a new colour different from $c(x')$ if
$c[X]\setminus c[Y]$ is empty. The colouring $c$ is an irreducible proper distinguishing vertex colouring of $DC(X,Y)$. This concludes the proof of the equivalence between conditions \ref{nwsr:lemat2xstarCh} and \ref{nwsr:lemat2xstarChe}, and thus the proof of the lemma.\end{proof}

Now we prove Theorem \ref{thm:main2}.
\begin{proof}
Assume that there exists an irreducible (proper) distinguishing colouring of a graph $DS(X,Y)$ ($DC(X,Y)$ respectively) for every disjoint sets  $X$ and $Y$. From Lemma \ref{lem:lemat2xstar}, we see that either $X \preceq Y$ or $Y \preceq X$ holds. Hence, the Law of Trichotomy holds, because the choice of $X$ and $Y$ was arbitrary, and so does the Axiom of Choice. This observation, together with Theorem \ref{thm:main1}, gives us Theorem \ref{thm:main2}.\end{proof}

\bibliographystyle{abbrv}
\bibliography{lit.bib}












\end{document}